\newtheorem{theorem}{Theorem}
\theoremstyle{plain}
\newtheorem{definition}{Definition}
\numberwithin{equation}{section}
\begin{document}
\title[A review on the $V_{n}$-slant helices in lightlike cone $%
%TCIMACRO{\U{211a} }%
%BeginExpansion
\mathbb{Q}
%EndExpansion
^{n+1}\subset E_{1}^{n+2}$]{A review on the $V_{n}$-slant helices in
lightlike cone $%
%TCIMACRO{\U{211a} }%
%BeginExpansion
\mathbb{Q}
%EndExpansion
^{n+1}\subset E_{1}^{n+2}$}
\author{Fatma ALMAZ}
\address{department of mathematics, faculty of arts and sciences, batman
university, batman/ t\"{u}rk\.{ı}ye}
\email{fatma.almaz@batman.edu.tr}
\author{Handan \"{O}ztek\.{ı}n}
\address{department of mathematics, faculty of science, firat university,
elaz\i \u{g}/ t\"{u}rk\.{ı}ye}
\email{handanoztekin@gmail.com}
\subjclass{53B30, 53C50, 51L10, 14H45}
\keywords{Lightlike cone, asymptotic orthonormal frame, slant helices,
harmonic curvature functions.}
\thanks{This paper is in final form and no version of it will be submitted
for publication elsewhere.}

\begin{abstract}
In this paper, $V_{n}$-slant helices and the harmonic curvature functions of 
$V_{n}$-slant helices are defined in lightlike cone $%
%TCIMACRO{\U{211a} }%
%BeginExpansion
\mathbb{Q}
%EndExpansion
^{n+1}$, and the differential equations of the harmonic curvature functions $%
H_{i}, 1 \leq i\leq n-2$ of $V_{n}$-slant helices are expressed by using
constant vector field $W $ that is the axis of $V_{n}$-slant helices. Also,
the necessary and sufficient conditions are given according to the condition
of being $V_{n}$-slant helices by using the asymptotic orthonormal frame in $%
%TCIMACRO{\U{211a} }%
%BeginExpansion
\mathbb{Q}
%EndExpansion
^{n+1}$.
\end{abstract}

\maketitle

\section{Introduction}

In differential geometry, curves are fundamental objects for understanding
the local structure of a space. One of the simplest and most familiar curves
in three-dimensional Euclidean space is the helix. A helix is a curve whose
tangent vector makes a constant angle with a fixed direction. Slant helices
are a generalization of this concept and are defined by the principal normal
vector making a constant angle with a fixed direction vector. Also, slant
helices are curves with distinct geometric properties, such as a constant
curvature-to-torsion ratio. The geometric properties of slant helices can
play a role in describing or modelling the behaviour of various physical
systems: optical fibers in the shape of slant helices can provide ideal
geometric models for studying certain polarization rotation effects; the
motion of a charged particle in a constant magnetic field generally follows
a helical trajectory, the trajectory can be more complex, resulting in slant
helix-like shapes; in fluid dynamics, helical structures can be described
using slant helix geometry, while more complex helical structures can be
used to understand geometric models and physical properties when studying
the configurations of macromolecules, such as polymers and DNA, etc.

Studies of slant helices in pseudo-Riemann spacetime structures such as
lightlike cones offer significant potential future contributions to both
differential geometry and theoretical physics. Their fundamental importance
stems from their ability to extend familiar curve properties in classical
Euclidean geometry to specific spacetime regions with degenerate metrics
that describe the trajectories of light and massless particles. Such studies
have provided a deeper understanding of the geometric properties of curves
on lightlike cones (such as curvature, torsion, and the condition for being
"slant").

The geometry of curves on or associated with a lightlike cone (such as
curvatures and torsions) is affected by the lightlike cone's degenerate
metric structure. The slant helix condition is an additional geometric
property that restricts the movement of these curves on or within the
lightlike cone, for generalization of the slant helix definition to a
lightlike cone space, caution is necessary when generalizing the slant helix
definition in Euclidean space directly to a lightlike cone or to general
Lorentzian space, as the metric is not positive definite and the norm of
lightlike vectors is zero. However, the slant property (i.e., the fact that
a given vector of a curve makes a constant pseudo-angle with a fixed
direction or vector field) can be defined using different definitions of the
normal or versions of the Frenet frame adapted to Lorentzian space.
Examining slant helices in the context of a lightlike cone can shed light on
topics such as the curves on the lightlike cone can represent the
trajectories of massless particles lightlike, the slant helix nature of
these trajectories indicates that such trajectories have certain geometric
constraints; light and electromagnetic waves travel along lightlike
trajectories, slant helices on the lightlike cone can provide geometric
models for the trajectories of electromagnetic waves with specific
polarization properties or wave propagation patterns; the lightlike cone is
used to describe important structures in spacetime, such as the event
horizons of black holes, the geometry of the curves on these structures is
important for understanding the behaviour of gravity and fields.

In summary, the slant helix concept can be extended to Lorentzian space, and
in particular to the lightlike cone, by generalizing its metric and
providing appropriate definitions. In this context, the slant helix
properties of curves on or associated with the lightlike cone (such as
lightlike orbits) provide additional information about the geometry of these
orbits and may be of theoretical importance in modelling physical phenomena
such as light propagation, electromagnetic waves, and certain structures in
curved spacetime. The relationship hinges on how the geometric properties of
the curve (the slant helix condition) are expressed within the specific
structure of the lightlike cone and the physical implications of this.

A lot of mathematicians have worked on helices and slant curves. In \cite{13}%
, the authors express null generalized slant helices in Lorentzian space. In 
\cite{2}, non-null magnetic curves are studied by the authors in the null
cone. In \cite{3}, the notion of the involute-evolute curves for the curves
lying the surfaces are examined by the authors in Minkowski 3-space $%
E_{1}^{3}$. In \cite{4}, the helix and slant helices are investigated using
non-degenerate curves in term of Sabban frame in de Sitter $3$-space or Anti
de Sitter $3$-space $M^{3}\left( \delta _{0}\right) $, the necessary and
sufficient conditions are given for the non-degenerate curves to be slant
helix. By considering the ideas discussed in \cite{6} for defining a
general spiral on $3D$-Lorentzian backgrounds by taking into account the
work of Barros in \cite{5}, in \cite{17} slant helices are defined on the $%
3D $ sphere and in anti-De Sitter space. The term \textquotedblleft slant
helix\textquotedblright\ are introduced by Izumiya and Takeuchi \cite{12},
slant helices are defined by the property that their principal normals make
a constant angle with a fixed direction, and the characterization of slant
helices given in \cite{12} are extended to the Minkowski space $M_{1}^{3}$
by Ali and L\'{o}pez \cite{1}. In \cite{7}, the authors characterize general
helices and slant helices in terms of their associated curves and give a
canonical method to construct them. In \cite{8}, some characterizations are given a general intrinsic geometry according to bishop fram by the authors in $ E^{3} $. In \cite{9}, non-null $ k- $slant helices are defined in Minkowski 3-space. In \cite{11}, a definition of harmonic
curvature functions in terms of $V_{n}$ are given and defined a new kind of
slant helix Minkowski $n-$space. In \cite{14}, the notions of helix and
slant helices are investigated in the lightlike cone by using the asymptotic
orthonormal frame, and some characterizations of helices and slant helices
are presented. In \cite{15,16}, the cone curves and cone curvature function
etc are studied by the authors. Also, their representations and some
examples of cone curves are given in Minkowski space. In \cite{19,20,21,22}, extensive studies on slant helices in different space forms have been discussed by the authors. In \cite{23}, the author give basic notations and equations by explaining the Frenet formula of a regular curve in a psuedo-Riemannian manifold, the definitions of proper curve and proper helix are expressed.

\section{Preliminaries}

The $m-$dimensional pseudo-Euclidean space $E_{q}^{m}$ is given with the
metric 
\begin{equation}
\ d(x,y)=\overset{m-q}{\underset{i=1}{\sum }}x_{i}y_{i}-\overset{m}{\underset%
{j=m-q+1}{\sum }}x_{j}y_{j},  \tag{2.1}
\end{equation}%
where $x=(x_{1},x_{2},...,x_{m}),y=(y_{1},y_{2,}...,y_{m})\in E_{q}^{m}$, $%
E_{q}^{m}$ \ is a flat pseudo-Riemannian manifold of signature $(m-q,q)$.
Let $M$ be a submanifold of $E_{q}^{m}$. If the pseudo-Riemannian metric $d$
of $E_{q}^{m}$ induces a pseudo- Riemannian metric $d$ (respectively, a
Riemannian metric, a degenerate quadratic form) on $M$, then $M$ is said to
be timelike(respectively, spacelike, degenerate) submanifold of $E_{q}^{m}$.
Let $c$ be a fixed point in $E_{q}^{m}$ and $r>0$ be a constant. The
pseudo-Riemannian null cone is given by 
\begin{equation*}
\mathbf{%
%TCIMACRO{\U{211a} }%
%BeginExpansion
\mathbb{Q}
%EndExpansion
}_{q}^{n}(c,r)=\{\varkappa \in E_{q}^{n+1}:d(\varkappa -c,\varkappa -c)=0\}
\end{equation*}%
and $\mathbf{%
%TCIMACRO{\U{211a} }%
%BeginExpansion
\mathbb{Q}
%EndExpansion
}_{q}^{n}(c,r)$ is a degenerate hypersurface in $E_{q}^{n+1}$. The point $c$ is called the center
of $\mathbf{%
%TCIMACRO{\U{211a} }%
%BeginExpansion
\mathbb{Q}
%EndExpansion
}_{q}^{n}(c,r)$. When $c=0$ and $q=1$, one simply denote $\mathbf{%
%TCIMACRO{\U{211a} }%
%BeginExpansion
\mathbb{Q}
%EndExpansion
}_{1}^{n}(0)$ by $\mathbf{%
%TCIMACRO{\U{211a} }%
%BeginExpansion
\mathbb{Q}
%EndExpansion
}^{n}$ and call it the lightlike cone (or simply the light cone), \cite%
{10,13,18}.

Let us recall from \cite{10,13,18} the definition of the Frenet frame and
curvatures, and let $E_{1}^{n+2}$ be the lightlike cone $(n+2)-$space $%
\mathbf{%
%TCIMACRO{\U{211a} }%
%BeginExpansion
\mathbb{Q}
%EndExpansion
}^{n+1}$ $\subset $ $E_{1}^{n+2}$. A vector $\varkappa \neq 0$ in $%
E_{1}^{n+2}$ is called spacelike, timelike or null, if $\left\langle
\varkappa ,\varkappa \right\rangle >0$, $\left\langle \varkappa ,\varkappa
\right\rangle <0$ or $\left\langle \varkappa ,\varkappa \right\rangle =0$,
respectively. A frame field $\{E_{1},E_{2},...,E_{n+1},E_{n+2}\}$ on $%
E_{1}^{n+2}$ is called as asymptotic orthonormal frame field, if 
\begin{eqnarray*}
\left\langle E_{n+1},E_{n+1}\right\rangle  &=&\left\langle
E_{n+2},E_{n+2}\right\rangle =0,\left\langle E_{n+1},E_{n+2}\right\rangle =1,
\\
\left\langle E_{n+1},E_{i}\right\rangle  &=&\left\langle
E_{n+2},E_{i}\right\rangle =0,\ \left\langle E_{i},E_{j}\right\rangle
=\delta _{ij},\ i,j=1,2,...,n
\end{eqnarray*}%
and a frame field $\{E_{1},E_{2},...,E_{n+1},E_{n+2}\}$ on $E_{1}^{n+2}$ is
called a pseudo orthonormal frame field, if 
\begin{eqnarray*}
\left\langle E_{n+1},E_{n+1}\right\rangle  &=&-\left\langle
E_{n+2},E_{n+2}\right\rangle =1,\left\langle E_{n+1},E_{n+2}\right\rangle =0,
\\
\left\langle E_{n+1},E_{i}\right\rangle  &=&\left\langle
E_{n+2},E_{i}\right\rangle =0,\ \left\langle E_{i},E_{j}\right\rangle
=\delta _{ij},\ i,j=1,2,...,n
\end{eqnarray*}%
and let the curve $\varkappa :I\longrightarrow 
%TCIMACRO{\U{211a} }%
%BeginExpansion
\mathbb{Q}
%EndExpansion
^{n+1}\subset E_{1}^{n+1}$, $t\longrightarrow \varkappa (t)\in 
%TCIMACRO{\U{211a} }%
%BeginExpansion
\mathbb{Q}
%EndExpansion
^{n+1}$ be a regular curve in $%
%TCIMACRO{\U{211a} }%
%BeginExpansion
\mathbb{Q}
%EndExpansion
^{n+1}$ and $\varkappa ^{\prime }(t)=\frac{d\varkappa (t)}{dt}$, for $%
\forall t\in I\subset 
%TCIMACRO{\U{211d} }%
%BeginExpansion
\mathbb{R}
%EndExpansion
$ since $\left\langle \varkappa ,\varkappa \right\rangle =0$ and $%
\left\langle \varkappa ,d\varkappa \right\rangle =0$, $d\varkappa (t)$ is
spacelike. Then, the induced arclength (or simply the arclength) $s$ of the
curve $\varkappa (t)$ can be defined by $ds^{2}=\left\langle d\varkappa
(t),d\varkappa (t)\right\rangle $, \cite{10,16,18}.

For the arc length $s$ of the curve $\varkappa (s)=\varkappa (t(s))$ as the
parameter, and $\varkappa ^{\prime }(s)=V_{1}(s)=\frac{d\varkappa }{ds}$ is
a spacelike unit tangent vector field of the curve $\varkappa (s)$. Then,
for the vector field $y(s)$ the spacelike normal space $V^{n-1}$ of the
curve $\varkappa (s)$ such that they satisfy the following conditions:%
\textit{\ }%
\begin{eqnarray*}
\left\langle \varkappa (s),y(s)\right\rangle &=&1,\left\langle \varkappa
(s),\varkappa (s)\right\rangle =\left\langle y(s),y(s)\right\rangle
=\left\langle V_{1}(s),y(s)\right\rangle =0, \\
V^{n-1} &=&\{span\{\varkappa ,y,V_{1}\}\}^{\perp },span_{R}\{\varkappa
,y,V_{1},V^{n-1}\}=E_{1}^{n+2}.
\end{eqnarray*}

Therefore, we have the following Frenet formulas 
\begin{eqnarray*}
\varkappa ^{\prime }(s) &=&V_{1}(s) \\
\nabla _{V_{1}}V_{1} &=&\ V_{1}^{\prime }(s)=\kappa _{1}(s)\varkappa
(s)-y(s)\ +\tau _{1}(s)V_{2}(s) \\
\ \nabla _{V_{1}}V_{2} &=&V_{2}^{\prime }(s)=\kappa _{2}(s)\varkappa
(s)-\tau _{1}(s)V_{1}(s)\ +\tau _{2}(s)V_{3}(s) \\
\nabla _{V_{1}}V_{3} &=&\ V_{3}^{\prime }(s)=\kappa _{3}(s)\varkappa
(s)-\tau _{2}(s)V_{2}(s)\ +\tau _{3}(s)V_{4}(s) \\
&&....
\end{eqnarray*}%
\begin{equation}
\nabla _{V_{1}}V_{i}=V_{i}^{\prime }(s)=\kappa _{i}(s)\varkappa (s)-\tau
_{i-1}(s)V_{i-1}(s)\ +\tau _{i}(s)V_{i+1}(s)  \tag{2.2}
\end{equation}%
\begin{eqnarray*}
&&\text{ \ \ \ \ \ \ \ }.... \\
\nabla _{V_{1}}V_{n-1} &=&V_{n-1}^{\prime }(s)=\kappa _{i-1}(s)\varkappa
(s)-\tau _{n-2}(s)V_{n-2}(s)\ +\tau _{n-1}(s)V_{n}(s) \\
\nabla _{V_{1}}V_{n} &=&V_{n}^{\prime }(s)=\kappa _{n}(s)\varkappa (s)-\tau
_{n-1}(s)V_{n-1}(s) \\
y^{\prime }(s) &=&-\underset{i}{\overset{n}{\sum }}\kappa _{i}(s)V_{i}(s),
\end{eqnarray*}%
where $V_{2}(s)$,$V_{3}(s)$, ...,$V_{n}(s)\in V^{n-1}$, $\left\langle
V_{i},V_{j}\right\rangle =\delta _{ij}$, $i,j=2,...,n$. The functions $%
\kappa _{1}(s)$, $\kappa _{2}(s)$, ... , $\kappa _{n}(s)$, $\tau _{1}(s)$ ,
... , $\tau _{n-1}(s)$ are called cone curvature functions of the curve $%
\varkappa (s)$. The frame%
\begin{equation}
\{\varkappa (s),V_{1}(s),V_{2}(s),V_{3}(s),...,V_{n}(s),y(s)\}  \tag{2.3}
\end{equation}%
is called the asymptotic orthonormal frame on $E_{1}^{n+2}$ along the curve $%
\varkappa (s)$ in $%
%TCIMACRO{\U{211a} }%
%BeginExpansion
\mathbb{Q}
%EndExpansion
^{n+1}$, \textit{\cite{10,16,18}}.

\section{Representation of $V_{n}-$slant helices in the lightlike cone $%
%TCIMACRO{\U{211a} }%
%BeginExpansion
\mathbb{Q}
%EndExpansion
^{n+1}$}

In this section, $V_{n}-$slant helix curves in lightlike cone $%
%TCIMACRO{\U{211a} }%
%BeginExpansion
\mathbb{Q}
%EndExpansion
^{n+1}$ are expressed and by using the harmonic curvatures $H_{i}, 1\leq i
\leq n-2$ some characterizations are given for $V_{n}-$slant helix curves
according to the asymptotic orthonormal frame in lightlike Cone $%
%TCIMACRO{\U{211a} }%
%BeginExpansion
\mathbb{Q}
%EndExpansion
^{n+1}$.

\begin{definition}
Let $\varkappa :I\longrightarrow 
%TCIMACRO{\U{211a} }%
%BeginExpansion
\mathbb{Q}
%EndExpansion
^{n+1}\subset E_{1}^{n+2}$ be a unit speed non-null curve with nonzero cone
curvature functions $\kappa _{1}(s)$, $\kappa _{2}(s)$, ... , $\kappa
_{n}(s) $; $\tau _{1}(s)$, ... , $\tau _{n-1}(s)$ in $%
%TCIMACRO{\U{211a} }%
%BeginExpansion
\mathbb{Q}
%EndExpansion
^{n+1}$ and let $\{\varkappa
(s),V_{1}(s),V_{2}(s),V_{3}(s),...,V_{n}(s),y(s)\}$ be the asymptotic
orthonormal frame of the curve $\varkappa $. Then, the curve $\varkappa $ is
said to be as a $V_{n}-$slant helix in $%
%TCIMACRO{\U{211a} }%
%BeginExpansion
\mathbb{Q}
%EndExpansion
^{n+1}$ if $n^{th}$ unit vector field $V_{n}$ makes a constant angle with a
fixed direction $W$, that is,%
\begin{equation}
\left\langle V_{n}(s),W\right\rangle =\eta _{n+1}=\text{constant, }\eta
_{n+1}\neq 0.  \tag{3.1}
\end{equation}

Therefore, for the subspace $Sp\{\varkappa
(s),V_{1}(s),V_{2}(s),V_{3}(s),...,V_{n}(s),y(s)\}$ the vector field $W$ can
be written as%
\begin{equation}
W\left( s\right) =\eta _{1}(s)\varkappa (s)+\eta _{n+2}(s)y(s)+\overset{n}{%
\underset{i=1}{\sum }}\eta _{i+1}(s)V_{i}(s).  \tag{3.2}
\end{equation}
\end{definition}

Let's try to express the vector field $W$ expressed in the definition above
again, so that one can find the differentiable functions $\eta _{i}$, $1\leq
i\leq n+2$ by using the equation (3.2). Thus, the following expressions can
be written 
\begin{equation}
\left\langle W,V_{i}\right\rangle =\eta _{i+1},1\leq i\leq n;\left\langle
W,y\right\rangle =\eta _{1};\left\langle W,x\right\rangle =\eta _{n+2}. 
\tag{3.3}
\end{equation}

First, if we take the derivative in the equation $\left\langle
W,y\right\rangle =\eta _{1}$, by using Frenet frame (2.2) and the equation
(3.2), one gets%
\begin{equation}
\eta _{1}^{\prime }=\left\langle W,y^{\prime }\right\rangle =-\underset{i}{%
\overset{n}{\sum }}\kappa _{i}\left\langle \eta _{1}\varkappa +\eta _{n+2}y+%
\overset{n}{\underset{j=1}{\sum }}\eta _{j+1}V_{j},V_{i}\right\rangle =-%
\underset{i}{\overset{n}{\sum }}\kappa _{i}\eta _{i+1}.  \tag{3.4}
\end{equation}

Secondly, if the derivative is taken in the equation $\left\langle
W,V_{i}\right\rangle =\eta _{i+1},1\leq i\leq n$, by using (2.2) and (3.2)
one gets%
\begin{equation*}
\eta _{i+1}^{\prime }=\left\langle W,V_{i}^{\prime }\right\rangle
=\left\langle \eta _{1}\varkappa +\eta _{n+2}y+\overset{n}{\underset{j=1}{%
\sum }}\eta _{j+1}V_{j},\kappa _{i}x-\tau _{i-1}V_{i-1}\ +\tau
_{i}V_{i+1}\right\rangle
\end{equation*}%
\begin{equation}
\eta _{i+1}^{\prime }=\kappa _{i}\eta _{n+2}-\tau _{i-1}\eta _{i}+\tau
_{i}\left( \eta _{i+2}+\eta _{1}\right) ,1\leq i\leq n.  \tag{3.5}
\end{equation}

Finally, by differentiating the equation $\left\langle W,\varkappa
\right\rangle =\eta _{n+2}$, \ from (2.2) and (3.2) one has%
\begin{equation}
\eta _{n+2}^{\prime }=\left\langle W,\varkappa ^{\prime }\right\rangle
=\left\langle \eta _{1}\varkappa +\eta _{n+2}y+\overset{n}{\underset{j=1}{%
\sum }}\eta _{j+1}V_{j},V_{1}\right\rangle =\eta _{2}.  \tag{3.6}
\end{equation}

Thus, from (3.4), (3.5) and (3.6) the following differential equations are
obtained%
\begin{equation}
\eta _{1} =-\underset{i}{\overset{n}{\sum }\int }\kappa _{i}\eta _{i+1}ds 
\tag{3.7a}
\end{equation}
\begin{equation}
\eta _{i+1} =\int \left( \kappa _{i}\eta _{n+2}-\tau _{i-1}\eta _{i}+\tau
_{i}\left( \eta _{i+2}+\eta _{1}\right) \right) ds;1\leq i\leq n  \tag{3.7b}
\end{equation}
\begin{equation}
\eta _{n+2} =\int \eta _{2}ds.  \tag{3.7c}
\end{equation}

If the equations given in (3.7) are taken into account in (3.2), the
following equation is obtained%
\begin{equation}
W\left( s\right) =-\left( \underset{i}{\overset{n}{\sum }\int }\kappa
_{i}\eta _{i+1}ds\right) \overrightarrow{\varkappa }(s)+\left( \int \eta
_{2}ds\right) \overrightarrow{y}(s)+\overset{n}{\underset{i=1}{\sum }}\eta
_{i+1}(s)\overrightarrow{V}_{i}(s).  \tag{3.8}
\end{equation}

\begin{definition}
Let $\varkappa :I\longrightarrow 
%TCIMACRO{\U{211a} }%
%BeginExpansion
\mathbb{Q}
%EndExpansion
^{n+1}\subset E_{1}^{n+2}$ be a unit speed non-null curve with non zero
curvatures $\kappa _{i}(s)$, $\tau _{j}(s)$, $(i=1,2,...,n;j=1,2,...,n-1)$
in $%
%TCIMACRO{\U{211a} }%
%BeginExpansion
\mathbb{Q}
%EndExpansion
^{n+1}$. Then, $\varkappa $ is a $V_{n}$-slant helix Harmonic curvature
functions $H_{i}:I\rightarrow IR$ are defined by%
\begin{eqnarray*}
H_{0} &=&0 \\
H_{1} &=&\frac{\tau _{1}}{\kappa _{2}},n=2 \\
H_{i} &=&\frac{1}{\tau _{i-1}}\left\{ 
\begin{array}{c}
\tau _{i}H_{i+2}+\kappa _{i}H_{1}-V_{1}[H_{i+1}] \\ 
+H_{i+1}\left( H_{n-2}-\tau _{1}H_{3}-\kappa _{1}H_{1}\right)%
\end{array}%
\right\} ,2<i<n.
\end{eqnarray*}
\end{definition}

\begin{proof}
Let $\varkappa :I\longrightarrow 
%TCIMACRO{\U{211a} }%
%BeginExpansion
\mathbb{Q}
%EndExpansion
^{n+1}\subset E_{1}^{n+2}$ be a unit speed non-null curve with non zero
curvatures $\kappa _{i}(s)$, $\tau _{j}(s)$, $(i=1,2,...,n;$ $j=1,2,...,n-1)$
given as the asymptotic orthonormal frame $\{\varkappa
(s),V_{1}(s),V_{2}(s),V_{3}(s),...,V_{n}(s),y(s)\}$ in $%
%TCIMACRO{\U{211a} }%
%BeginExpansion
\mathbb{Q}
%EndExpansion
^{n+1}$ and Harmonic curvature functions $H_{i}:I\rightarrow R$ of the curve 
$\varkappa $, $i=1,...,n-2$, and let $W$ be a unit constant vector field.
Then, since $\varkappa $ is a $V_{n}-$ slant helix, \ let's first try to
prove the accuracy of the equation given below 
\begin{equation}
\left\langle V_{i-2},W\right\rangle =H_{i-1}\left\langle
V_{1},W\right\rangle .  \tag{3.9}
\end{equation}

For $i=2$, since the unit constant vector field $W$, if the derivative is
taken the equation $V_{n}-$ slant helix, and cone Frenet formulas one gets 
\begin{equation*}
\left\langle D_{V_{1}}V_{i-2},W\right\rangle =V_{1}[H_{i-1}]\left\langle
V_{1},W\right\rangle +H_{i-1}\left\langle D_{V_{1}}V_{1},W\right\rangle
\end{equation*}%
\begin{equation}
\left\langle \kappa _{i-2}\varkappa -\tau _{i-3}V_{i-3}+\tau
_{i-2}V_{i-1},W\right\rangle =V_{1}[H_{i-1}]\left\langle
V_{1},W\right\rangle +H_{i-1}\left\langle \kappa _{1}\varkappa -y\ +\tau
_{1}V_{2},W\right\rangle  \tag{3.10}
\end{equation}%
and since $V_{n}-$ slant helix equation, one gets%
\begin{equation}
\left\langle \kappa _{n}\varkappa -\tau _{n-1}V_{n-1},W\right\rangle
=0\Rightarrow \left\langle V_{n-1},W\right\rangle =\frac{\kappa _{n}}{\tau
_{n-1}}\left\langle \varkappa ,W\right\rangle .  \tag{3.11}
\end{equation}

For $n=2,$ one has%
\begin{equation}
\left\langle V_{1},W\right\rangle =\frac{\kappa _{2}}{\tau _{1}}\left\langle
\varkappa ,W\right\rangle  \tag{3.12}
\end{equation}%
or 
\begin{equation}
\left\langle \varkappa ,W\right\rangle =H_{1}\left\langle
V_{1},W\right\rangle  \tag{3.13}
\end{equation}%
where $H_{1}=\frac{\tau _{1}}{\kappa _{2}}$. Furthermore, when $i$ is $i-1$,
the equation (3.9) becomes%
\begin{equation}
\left\langle V_{i-3},W\right\rangle =H_{i-2}\left\langle
V_{1},W\right\rangle ,  \tag{3.14}
\end{equation}%
when $i$ is $i+1$, the equation (3.9) becomes%
\begin{equation}
\left\langle V_{i-1},W\right\rangle =H_{i}\left\langle V_{1},W\right\rangle ,
\tag{3.15}
\end{equation}%
when $i$ is $4$, the equation (3.9) becomes%
\begin{equation}
\left\langle V_{2},W\right\rangle =H_{3}\left\langle V_{1},W\right\rangle , 
\tag{3.16}
\end{equation}%
when $i$ is $n+3$, the equation (3.9) becomes%
\begin{equation}
\left\langle V_{n+1},W\right\rangle =H_{n-2}\left\langle
V_{1},W\right\rangle ,  \tag{3.17}
\end{equation}%
where $V_{n+1}=y.$ From (3.13) and (3.14-3.17), the equation (3.10) is given
as 
\begin{eqnarray*}
\kappa _{i-2}\left\langle \varkappa ,W\right\rangle -\tau _{i-3}\left\langle
V_{i-3},W\right\rangle +\tau _{i-2}H_{i}\left\langle V_{1},W\right\rangle
&=&V_{1}[H_{i-1}]\left\langle V_{1},W\right\rangle \\
&&+H_{i-1}\kappa _{1}\left\langle \varkappa ,W\right\rangle \\
&&-H_{i-1}H_{n-2}\left\langle V_{1},W\right\rangle \\
&&+H_{i-1}\tau _{1}H_{3}\left\langle V_{1},W\right\rangle \text{ }
\end{eqnarray*}%
\begin{eqnarray*}
\tau _{i-3}\left\langle V_{i-3},W\right\rangle &=&\tau
_{i-2}H_{i}\left\langle V_{1},W\right\rangle +\kappa _{i-2}\left\langle
\varkappa ,W\right\rangle -V_{1}[H_{i-1}]\left\langle V_{1},W\right\rangle \\
&&+H_{i-1}H_{n-2}\left\langle V_{1},W\right\rangle -H_{i-1}\kappa
_{1}\left\langle \varkappa ,W\right\rangle -H_{i-1}\tau
_{1}H_{3}\left\langle V_{1},W\right\rangle
\end{eqnarray*}%
\begin{equation}
\left\langle V_{i-3},W\right\rangle =\frac{1}{\tau _{i-3}}\left\{ 
\begin{array}{c}
\tau _{i-2}H_{i}+\kappa _{i-2}H_{1}-V_{1}[H_{i-1}]+H_{i-1}H_{n-2} \\ 
-H_{i-1}\kappa _{1}H_{3}-H_{i-1}\kappa _{1}H_{1}%
\end{array}%
\right\} \left\langle V_{1},W\right\rangle .  \tag{3.18}
\end{equation}

Also, when $i$ is $i+1$, the equation (3.18) becomes%
\begin{equation}
\left\langle V_{i-2},W\right\rangle =\frac{1}{\tau _{i-2}}\left\{ 
\begin{array}{c}
\tau _{i-1}H_{i+1}+\kappa _{i-1}H_{1}-V_{1}[H_{i}] \\ 
+H_{i}H_{n-2}-H_{i}\tau _{1}H_{3}-H_{i}\kappa _{1}H_{1}%
\end{array}%
\right\} \left\langle V_{1},W\right\rangle  \tag{3.19}
\end{equation}%
\begin{equation}
\left\langle V_{i-2},W\right\rangle =H_{i-1}\left\langle
V_{1},W\right\rangle ,  \tag{3.20}
\end{equation}%
where 
\begin{equation}
H_{i-1}=\frac{1}{\tau _{i-2}}\left\{ 
\begin{array}{c}
\tau _{i-1}H_{i+1}+\kappa _{i-1}H_{1}-V_{1}[H_{i}] \\ 
+H_{i}H_{n-2}-H_{i}\tau _{1}H_{3}-H_{i}\kappa _{1}H_{1}%
\end{array}%
\right\} ,  \tag{3.21}
\end{equation}%
when $i$ is $i+1$, the equation (3.18) is given as%
\begin{equation}
H_{i}=\frac{1}{\tau _{i-1}}\left\{ \tau _{i}H_{i+2}+\kappa
_{i}H_{1}-V_{1}[H_{i+1}]+H_{i+1}\left( H_{n-2}-\tau _{1}H_{3}-\kappa
_{1}H_{1}\right) \right\} ,  \tag{3.22}
\end{equation}%
where $2<i<n+2.$
\end{proof}

\begin{theorem}
Let $\varkappa :I\longrightarrow 
%TCIMACRO{\U{211a} }%
%BeginExpansion
\mathbb{Q}
%EndExpansion
^{n+1}\subset E_{1}^{n+2}$ be a unit speed non-null curve with non zero
curvatures $\kappa _{i}(s)$, $\tau _{j}(s)$, $(i=1,2,...,n;$ $j=1,2,...,n-1)$
given as the asymptotic orthonormal frame $\{\varkappa
(s),V_{1}(s),V_{2}(s),V_{3}(s),...,V_{n}(s),y(s)\}$ in $%
%TCIMACRO{\U{211a} }%
%BeginExpansion
\mathbb{Q}
%EndExpansion
^{n+1}$ and let $H_{i}:I\rightarrow R,i=1,...,n-2$ be Harmonic curvature
functions of the curve $\varkappa $. If $\varkappa $ is a $V_{n}-$ slant
helix, a unit constant vector field $W$ is the axis of $V_{n}-$ slant helix
and $W$ is written as 
\begin{equation*}
W=(H_{1}\overrightarrow{y}+H_{n+2}\overrightarrow{\varkappa }+\underset{i=1}{%
\overset{n}{\sum }}\frac{1}{\tau _{i}}\left\{ 
\begin{array}{c}
\tau _{i+1}H_{i+3}+\kappa _{i+1}H_{1} \\ 
-V_{1}[H_{i+2}] \\ 
+H_{i+2}\left( H_{n-2}-\tau _{1}H_{3}-\kappa _{1}H_{1}\right)%
\end{array}%
\right\} \overrightarrow{V}_{i})\left\langle V_{1},W\right\rangle,
\end{equation*}%
where $H_{1}=\frac{\tau _{1}}{\kappa _{2}}.$
\end{theorem}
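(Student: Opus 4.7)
The plan is to expand $W$ in the asymptotic orthonormal frame along $\varkappa$ and then read off each coefficient as a scalar multiple of $\langle V_1, W\rangle$ using the identities already established in the proof of Definition~2. Since $W$ lies in the span $\{\varkappa, V_1,\ldots,V_n, y\}$, equation (3.2) supplies the decomposition $W = \eta_1\varkappa + \eta_{n+2}y + \sum_{i=1}^n \eta_{i+1}V_i$, and the inner-product identities (3.3) recognise each $\eta_j$ as an inner product of $W$ with a specific frame vector. So the whole proof reduces to computing $\langle W,\varkappa\rangle$, $\langle W, y\rangle$, and each $\langle W, V_i\rangle$ as multiples of $\langle V_1, W\rangle$ with harmonic-curvature coefficients.

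First I would invoke the key scalar relation $\langle V_k, W\rangle = H_{k+1}\langle V_1, W\rangle$, which is (3.9) under an index shift and was proved in Definition~2. Specialising to the two degenerate slots gives, via (3.13), $\langle W,\varkappa\rangle = H_1\langle V_1, W\rangle$, which is the coefficient $\eta_{n+2}$ of $y$ in (3.2); and, via (3.17) with the identification $V_{n+1}=y$, the coefficient $\eta_1$ of $\varkappa$ as a prescribed multiple of $\langle V_1, W\rangle$. For an interior index, applying the general recursion (3.22) under the shift $i\mapsto i+1$ produces, for each $1\le i\le n$,
\begin{equation*}
\langle W, V_i\rangle = \frac{1}{\tau_i}\Bigl\{\tau_{i+1}H_{i+3} + \kappa_{i+1}H_1 - V_1[H_{i+2}] + H_{i+2}\bigl(H_{n-2} - \tau_1 H_3 - \kappa_1 H_1\bigr)\Bigr\}\langle V_1, W\rangle,
\end{equation*}
which is precisely the bracketed coefficient of $\overrightarrow{V}_i$ written in the theorem.

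Substituting these three families of expressions into the decomposition (3.2) and factoring out the common scalar $\langle V_1, W\rangle$ then reproduces the stated formula. The main obstacle I anticipate is purely bookkeeping: the recursion (3.22) and the auxiliary identities (3.13)--(3.17) are indexed in several slightly different ways depending on whether the target vector is $\varkappa$, an interior $V_k$, or $y=V_{n+1}$, and one must verify carefully that the shift $i\mapsto i+1$ applied to (3.22) delivers exactly the bracketed expression in the theorem, and likewise that $H_1=\tau_1/\kappa_2$ reappears as the $\overrightarrow{y}$-coefficient through (3.13). There is no genuinely new analytical content beyond what is already in the proof of Definition~2; the theorem is effectively a compact repackaging of those identities into a single vectorial formula for the axis $W$.
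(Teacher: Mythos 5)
Your proposal is correct and follows essentially the same route as the paper's own proof: it identifies the coefficients in the decomposition (3.2) via the relations $\langle V_{i-2},W\rangle=H_{i-1}\langle V_{1},W\rangle$ (with the conventions $\varkappa=V_{0}$, $y=V_{n+1}$), obtains $W=(H_{n+2}\overrightarrow{\varkappa}+H_{1}\overrightarrow{y}+\sum_{i=1}^{n}H_{i+1}\overrightarrow{V}_{i})\langle V_{1},W\rangle$, and then substitutes the recursion (3.22) shifted by one index to produce the bracketed coefficients of $\overrightarrow{V}_{i}$. The only caveat is the indexing of the $\varkappa$-coefficient ($H_{n+2}$ in the theorem versus $H_{n-2}$ in (3.17)), an inconsistency already present in the paper itself rather than a defect of your argument.
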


\begin{proof}
If the axis of $V_{n}-$ slant helix in $%
%TCIMACRO{\U{211a} }%
%BeginExpansion
\mathbb{Q}
%EndExpansion
^{n+1}$ is $W$; then for the asymptotic orthonormal frame $\{\varkappa
(s),V_{1}(s),V_{2}(s),V_{3}(s),...,V_{n}(s),y(s)\}$ in $%
%TCIMACRO{\U{211a} }%
%BeginExpansion
\mathbb{Q}
%EndExpansion
^{n+1}$ and by using the equation (3.9) one gets 
\begin{equation}
i =2;\left\langle \varkappa ,W\right\rangle =H_{1}\left\langle
V_{1},W\right\rangle  \tag{3.23a}
\end{equation}
\begin{equation}
3 \leq i\leq n+2;\left\langle V_{i-2},W\right\rangle =H_{i-1}\left\langle
V_{1},W\right\rangle  \tag{3.23b}
\end{equation}
\begin{equation}
i =n+3;\left\langle y,W\right\rangle =H_{n+2}\left\langle
V_{1},W\right\rangle ,  \tag{3.23c}
\end{equation}
where $\varkappa =V_{0\text{ }}$and $y=V_{n+1}.$ When the previous equations
and the equation (3.2) are considered together, the following expressions
are written 
\begin{equation}
\eta _{1} =\left\langle y,W\right\rangle =H_{n+2}\left\langle
V_{1},W\right\rangle  \tag{3.24a}
\end{equation}
\begin{equation}
\eta _{i} =\left\langle V_{i-1},W\right\rangle =H_{i}\left\langle
V_{1},W\right\rangle ;2\leq i\leq n+1  \tag{3.24b}
\end{equation}
\begin{equation}
\eta _{n+2} =\left\langle \varkappa ,W\right\rangle =H_{1}\left\langle
V_{1},W\right\rangle .  \tag{3.24c}
\end{equation}

Thus, when equations (3.24) are considered in (3.2), $W$ the equation is
easily obtained%
\begin{equation}
W=\left( H_{n+2}\overrightarrow{\varkappa }+H_{1}\overrightarrow{y}+\underset%
{i=1}{\overset{n}{\sum }}H_{i+1}\overrightarrow{V}_{i}\right) \left\langle
V_{1},W\right\rangle .  \tag{3.25}
\end{equation}

Finally, by using Harmonic curvature functions $H_{i}$ (3.12), (3.22) in
equation (3.25), one writes%
\begin{equation*}
W=(\frac{\tau _{1}}{\kappa _{2}}\overrightarrow{y}+H_{n+2}\overrightarrow{%
\varkappa }+\underset{i=1}{\overset{n}{\sum }}\frac{1}{\tau _{i}}\left\{ 
\begin{array}{c}
\tau _{i+1}H_{i+3}+\kappa _{i+1}H_{1}-V_{1}[H_{i+2}] \\ 
+H_{i+2}\left( H_{n-2}-\tau _{1}H_{3}-\kappa _{1}H_{1}\right)%
\end{array}%
\right\} \overrightarrow{V}_{i})\left\langle V_{1},W\right\rangle .
\end{equation*}
\end{proof}

\begin{theorem}
Let $\varkappa :I\longrightarrow 
%TCIMACRO{\U{211a} }%
%BeginExpansion
\mathbb{Q}
%EndExpansion
^{n+1}\subset E_{1}^{n+2}$ be a unit speed non-null curve $%
%TCIMACRO{\U{211a} }%
%BeginExpansion
\mathbb{Q}
%EndExpansion
^{n+1}$, $W$ be a unit constant vector field and for harmonic curvature
functions $H_{1},H_{2},...,H_{n-2}$ of the curve $\varkappa $. Then, if $%
\varkappa $ is a $V_{n}-$ slant helix, then the following equation satisfies%
\begin{equation*}
\frac{H_{n+1}^{2}}{\eta _{n+1}^{2}}-2H_{1}H_{n+2}=\underset{i=1}{\overset{n}{%
\sum }}H_{i+1}^{2}.
\end{equation*}
\end{theorem}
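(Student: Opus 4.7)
The plan is to compute the squared norm $\langle W, W\rangle$ in two different ways: once exploiting the hypothesis that $W$ is a unit constant vector field (so $\langle W, W\rangle = 1$), and once by expanding $W$ in the asymptotic orthonormal frame. Equating the two expressions and then using the identifications of the components $\eta_j$ in terms of the harmonic curvatures established in the preceding theorem produces exactly the claimed identity after one rearrangement.

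Concretely, I would first substitute the decomposition
\[
W = \eta_1\varkappa + \eta_{n+2}\,y + \sum_{i=1}^n \eta_{i+1}V_i
\]
from (3.2) into $\langle W, W\rangle$. Using the asymptotic orthonormal frame relations $\langle \varkappa, \varkappa\rangle = \langle y, y\rangle = 0$, $\langle \varkappa, y\rangle = 1$, $\langle V_i, V_j\rangle = \delta_{ij}$, together with the orthogonality of $\varkappa, y$ to each $V_i$, I obtain
\[
\langle W, W\rangle \;=\; 2\eta_1\eta_{n+2} + \sum_{i=1}^n \eta_{i+1}^2.
\]
Setting the left-hand side equal to $1$ and inserting the identifications $\eta_1 = H_{n+2}\langle V_1,W\rangle$, $\eta_{n+2} = H_1\langle V_1,W\rangle$, and $\eta_{i+1} = H_{i+1}\langle V_1,W\rangle$ from (3.24) factors $\langle V_1,W\rangle^2$ out of both terms and yields
\[
1 \;=\; \langle V_1,W\rangle^2 \Bigl(\,2H_1 H_{n+2} + \sum_{i=1}^n H_{i+1}^2\Bigr).
\]

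To convert the factor $1/\langle V_1,W\rangle^2$ into the form that appears in the statement, I would invoke the slant-helix condition: $\eta_{n+1} = \langle V_n,W\rangle$ is a nonzero constant and, by (3.24b), it equals $H_{n+1}\langle V_1,W\rangle$, so $1/\langle V_1,W\rangle^2 = H_{n+1}^2/\eta_{n+1}^2$. Dividing the previous display by $\langle V_1,W\rangle^2$ and isolating the summation then gives exactly
\[
\frac{H_{n+1}^{\,2}}{\eta_{n+1}^{\,2}} - 2H_1 H_{n+2} \;=\; \sum_{i=1}^n H_{i+1}^2,
\]
as required.

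The main obstacle is really only bookkeeping and convention rather than any genuinely difficult computation. One needs to verify (i) that $W$ is unit \emph{spacelike}, so that $\langle W, W\rangle = +1$ rather than $-1$; (ii) that the boundary-index harmonic curvatures $H_{n+1}$ and $H_{n+2}$ appearing in the statement are interpreted via the same recursion (extended one step past $i = n-2$) used in Theorem~1, so that the identifications $\eta_1 = H_{n+2}\langle V_1,W\rangle$ and $\eta_{n+1} = H_{n+1}\langle V_1,W\rangle$ go through consistently; and (iii) that $H_{n+1}$ does not vanish, so that the ratio $H_{n+1}^2/\eta_{n+1}^2$ is well defined. Once these conventions are pinned down, the proof reduces to reading off the value of $\langle W,W\rangle$ in the asymptotic orthonormal frame.
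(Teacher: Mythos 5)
Your proposal is correct and follows essentially the same route as the paper: the paper likewise computes $\langle W,W\rangle=1$ from the frame expansion of $W$ with the identifications $\eta_j=H_j\langle V_1,W\rangle$ (equation (3.26), i.e.\ $1=\langle V_1,W\rangle^2(2H_{n+2}H_1+\sum_{i=1}^n H_{i+1}^2)$), then uses the slant-helix condition $\eta_{n+1}=H_{n+1}\langle V_1,W\rangle=$ constant to eliminate $\langle V_1,W\rangle^2$ and obtain the stated identity. Your remarks on the sign convention for $\langle W,W\rangle$, the extended index range of the $H_i$, and the nonvanishing of $H_{n+1}$ are reasonable caveats that the paper leaves implicit.
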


\begin{proof}
Let $\varkappa $ be a $V_{n}-$ slant helix with the arc length parameters.
Since $W$ is a unit constant vector field, by using (3.25) we obtain%
\begin{equation}
1=g\left( V_{1},W\right) ^{2}(2H_{n+2}H_{1}+\underset{i=1}{\overset{n}{\sum }%
}H_{i+1}^{2})  \tag{3.26}
\end{equation}%
and since $\varkappa $ is a $V_{n}-$ slant helix, 
\begin{equation}
\eta _{n+1}=\left\langle W,V_{n}\right\rangle =H_{n+1}\left\langle
W,V_{1}\right\rangle =\text{constant}  \tag{3.27}
\end{equation}%
and from (3.26), one gets%
\begin{equation}
\frac{H_{n+1}^{2}}{\eta _{n+1}^{2}}-2H_{n+2}H_{1}=\underset{i=1}{\overset{n}{%
\sum }}H_{i+1}^{2}.  \tag{3.28}
\end{equation}
\end{proof}

\begin{theorem}
Let $\varkappa :I\longrightarrow 
%TCIMACRO{\U{211a} }%
%BeginExpansion
\mathbb{Q}
%EndExpansion
^{n+1}\subset E_{1}^{n+2}$ be a unit speed non-null curve $%
%TCIMACRO{\U{211a} }%
%BeginExpansion
\mathbb{Q}
%EndExpansion
^{n+1}$, $W$ be a unit constant vector field, for the harmonic curvatures $%
H_{i},$ $i=1,2,...,n-2$ of $V_{n}-$ slant helix the following differential
equations are satisfied 
\begin{eqnarray*}
H_{1}^{\prime } &=&H_{2} \\
H_{2}^{\prime } &=&-H_{n+2}+k_{1}H_{1}+\tau _{1}H_{3} \\
H_{i+1}^{\prime } &=&k_{i}H_{1}-\tau _{i-1}H_{i}+\tau _{i}H_{i+2};2\leq
i\leq n \\
H_{n+2}^{\prime } &=&-\overset{n}{\underset{i=1}{\sum }}k_{i}H_{i+1}
\end{eqnarray*}%
and there exist functions $G_{i}\in C^{\infty }$ of satisfying equalities 
\begin{eqnarray*}
G_{1} &=&s+c;G_{2}=1;G_{3}=G_{n+2}-k_{1}G_{1} \\
G_{i+1} &=&\int (k_{i}G_{1}-\tau _{i-1}G_{i}+\tau _{i}G_{i+2})ds;3\leq i\leq
n \\
G_{n+1} &=&-\int (k_{1}+\overset{n}{\underset{i=2}{\sum }}k_{i}G_{i})ds
\end{eqnarray*}%
where 
\begin{equation*}
G_{i}(s)=\frac{H_{i}(s)}{H_{2}};1\leq i\leq n+2.
\end{equation*}
\end{theorem}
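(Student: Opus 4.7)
The plan is to exploit that $W$ is a constant vector field along $\varkappa$, so $W'\equiv 0$, and to apply this to the explicit expansion of $W$ from the preceding theorem. Writing $\alpha := \langle V_1,W\rangle$ and $U := H_{n+2}\,\varkappa + H_1\,y + \sum_{i=1}^{n} H_{i+1}V_i$, so that (3.25) reads $W=\alpha\,U$, I would differentiate using the cone Frenet formulas (2.2), expanding $\varkappa' = V_1$, $y' = -\sum_j \kappa_j V_j$, $V_1' = \kappa_1\varkappa - y + \tau_1 V_2$, and $V_i' = \kappa_i\varkappa - \tau_{i-1}V_{i-1}+\tau_i V_{i+1}$ for $i\geq 2$, and collect coefficients in the frame $\{\varkappa,V_1,\dots,V_n,y\}$.

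By the linear independence of the asymptotic orthonormal frame, every coefficient in the resulting vanishing combination must be zero. The coefficient of $y$ yields $H_1' = H_2$; the coefficient of $V_1$ yields $H_2' = -H_{n+2}+\kappa_1 H_1 + \tau_1 H_3$; the coefficient of $V_i$ for $2\leq i\leq n$ yields the recursion $H_{i+1}' = \kappa_i H_1 - \tau_{i-1}H_i + \tau_i H_{i+2}$ (adopting $\tau_n=0$ by convention at the upper endpoint); and the coefficient of $\varkappa$ yields $H_{n+2}' = -\sum_{i=1}^{n}\kappa_i H_{i+1}$. This establishes the four differential identities in the first half of the theorem.

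For the $G_i$ half, I would take the definition $G_i := H_i/H_2$ directly and translate each of the $H_i$-ODEs into the corresponding statement on $G_i$. The relation $H_1'=H_2$ yields $G_1'=1$, hence $G_1=s+c$, while $G_2\equiv 1$ is immediate; rearranging the $H_2'$ identity (divided by $H_2$) gives the closed-form expression $G_3 = G_{n+2}-\kappa_1 G_1$; dividing the recursion for $H_{i+1}'$ by $H_2$ and integrating produces the integral formula $G_{i+1} = \int(\kappa_i G_1 - \tau_{i-1}G_i + \tau_i G_{i+2})\,ds$ for $3\leq i\leq n$; and the $H_{n+2}'$ identity, after separating out the $i=1$ contribution $\kappa_1 H_2/H_2 = \kappa_1$ and integrating, yields the claimed expression for $G_{n+1}$.

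The main obstacle I foresee is the careful bookkeeping of coefficients in the expansion of $U'$, since the Frenet formula for $V_1$ carries an extra $-y$ term absent from $V_i'$ for $i\geq 2$, and the boundary indices $i=n-1,n$ in the recursion require interpreting $\tau_n V_{n+1}$ (not defined by the Frenet data) as zero. A secondary subtlety is justifying the division by $H_2$ in the $G_i$ step, which requires $H_2\neq 0$ along $I$; this is ensured by the non-vanishing hypotheses placed on the cone curvatures $\kappa_i,\tau_j$ together with the recursive definition of the harmonic curvatures in Definition~2.
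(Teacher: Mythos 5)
Your proposal is correct and follows essentially the same route as the paper: the paper obtains the four differential identities by differentiating the scalar relations $\langle y,W\rangle=H_{n+2}\eta_2$, $\langle\varkappa,W\rangle=H_1\eta_2$, $\langle V_{i-1},W\rangle=H_i\eta_2$ using that $W$ is constant, the expansion (3.25) and the cone Frenet formulas (2.2), which is just the dual form of your coefficient comparison in the basis $\{\varkappa,V_1,\dots,V_n,y\}$, and the $G_i$ part is handled identically by dividing by $H_2$ and integrating case by case. Note only that the term $\alpha'U$ you drop (the derivative of $\langle V_1,W\rangle$) is likewise silently dropped in the paper's computations, so on that point you reproduce rather than deviate from its tacit assumption.
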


\begin{proof}
Suppose that the curve $\varkappa $ is a unit speed non-null $V_{n}-$ slant
helix in $%
%TCIMACRO{\U{211a} }%
%BeginExpansion
\mathbb{Q}
%EndExpansion
^{n+1}$ and a unit constant vector field is $W$ given as the equation (3.2),
for the differential functions $\eta _{i},i=1,2,...,n+2$ and from the
asymptotic orthonormal frame $\{\varkappa
(s),V_{1}(s),V_{2}(s),V_{3}(s),...,V_{n}(s),y(s)\}$ and the harmonic
curvature functions $H_{i}$ of $\varkappa $, the position vector of $V_{n}-$
slant helix satisfies 
\begin{equation}
\eta _{1}=\left\langle y,W\right\rangle =H_{n+2}\eta _{2};\eta
_{i}=\left\langle V_{i-1},W\right\rangle =H_{i}\eta _{2};\eta
_{n+2}=\left\langle \varkappa ,W\right\rangle =H_{1}\eta _{2},  \tag{3.29}
\end{equation}%
where $\eta _{2}=\left\langle V_{1},W\right\rangle $. If the derivative of
the equation $\eta _{1}=\left\langle y,W\right\rangle =H_{n+2}\eta _{2}$ is
taken, one gets%
\begin{equation*}
H_{n+2}^{\prime }\eta _{2}=\left\langle y^{\prime },W\right\rangle
=\left\langle (H_{n+2}\overrightarrow{\varkappa }+H_{1}\overrightarrow{y}+%
\underset{j=1}{\overset{n}{\sum }}H_{j+1}\overrightarrow{V}_{j})\eta _{2},-%
\underset{i}{\overset{n}{\sum }}\kappa _{i}\overrightarrow{V}%
_{i}\right\rangle
\end{equation*}%
\begin{equation}
H_{n+2}^{\prime }=-\underset{i=1}{\overset{n}{\sum }}H_{i+1}\kappa _{i} 
\tag{3.30}
\end{equation}%
and by taking from the derivative of the equation $\eta _{n+2}=\left\langle
\varkappa ,W\right\rangle =H_{1}\eta _{2},$ one gets%
\begin{equation*}
H_{1}^{\prime }\eta _{2}=\left\langle \varkappa ^{\prime },W\right\rangle
=\left\langle \overrightarrow{V}_{1},(H_{n+2}\overrightarrow{\varkappa }%
+H_{1}\overrightarrow{y}+\underset{j=1}{\overset{n}{\sum }}H_{j+1}%
\overrightarrow{V}_{j})\eta _{2}\right\rangle
\end{equation*}%
\begin{equation}
H_{1}^{\prime }=H_{2},  \tag{3.31}
\end{equation}%
by taking from the derivative of the equation $\eta _{i}=\left\langle
V_{i-1},W\right\rangle =H_{i}\eta _{2},$ from (3.2) and (2.2) one gets%
\begin{equation*}
H_{i+1}^{\prime }\eta _{2}=\left\langle W,V_{i}^{\prime }\right\rangle =\eta
_{2}\left\langle H_{n+2}\overrightarrow{\varkappa }+H_{1}\overrightarrow{y}+%
\underset{j=1}{\overset{n}{\sum }}H_{j+1}\overrightarrow{V}_{j},\kappa _{i}%
\overrightarrow{\varkappa }-\tau _{i-1}\overrightarrow{V}_{i-1}\ +\tau _{i}%
\overrightarrow{V}_{i+1}\right\rangle
\end{equation*}%
\begin{equation}
H_{i+1}^{\prime }=\kappa _{i}H_{1}-\tau _{i-1}H_{i+1}+\tau
_{i}H_{i+2};j\rightarrow i-1,j\rightarrow i+1,  \tag{3.32}
\end{equation}
by taking from the derivative of the equation $\left\langle
V_{1},W\right\rangle =H_{2}\eta _{2},$ from (3.2) and (2.2) one gets%
\begin{equation*}
H_{2}^{\prime }\eta _{2}=\left\langle W,V_{1}^{\prime }\right\rangle
=\left\langle (H_{n+2}\overrightarrow{\varkappa }+H_{1}\overrightarrow{y}+%
\underset{j=1}{\overset{n}{\sum }}H_{j+1}\overrightarrow{V}_{j})\eta
_{2},\kappa _{1}\overrightarrow{\varkappa }-\overrightarrow{y}\ +\tau _{1}%
\overrightarrow{V}_{2}\right\rangle
\end{equation*}%
\begin{equation}
H_{2}^{\prime }=-H_{n+2}+H_{1}\kappa _{1}+\tau _{1}H_{3}\Rightarrow H_{3}=%
\frac{1}{\tau _{1}}\left( H_{2}^{\prime }+H_{n+2}-H_{1}\kappa _{1}\right) . 
\tag{3.33}
\end{equation}

Thus, the differential equations are obtained. Finally, the functions $H_{i}$
can be expressed in terms of $G_{i}$ functions as 
\begin{equation}
G_{i}(s)=\frac{H_{i}(s)}{H_{2}};1\leq i\leq n+2.  \tag{3.34}
\end{equation}%
\textbf{Case 1}: For $i=1$, $G_{1}(s)=\frac{H_{1}(s)}{H_{2}}$, and by
differentiating with respect to $s$ and applying (3.31), one gets%
\begin{equation}
G_{1}^{\prime }(s)=\frac{H_{1}^{\prime }(s)}{H_{2}}=\frac{H_{2}}{H_{2}}%
\Rightarrow G_{1}(s)=s+c.  \tag{3.35}
\end{equation}%
\textbf{Case 2}: For $i=2$, $G_{2}(s)=1$.\newline
\textbf{Case 3:} For $i=3$, $G_{3}(s)=\frac{H_{3}(s)}{H_{2}}$, and from $%
H_{3}=\frac{1}{\tau _{1}}\left( H_{2}^{\prime }+H_{n+2}-H_{1}\kappa
_{1}\right) $ one has 
\begin{equation}
G_{3}(s)=\frac{1}{\tau _{1}}\frac{H_{2}^{\prime }+H_{n+2}-H_{1}\kappa _{1}}{%
H_{2}}\Rightarrow G_{3}(s)=\frac{1}{\tau _{1}}G_{n+2}(s)-\frac{\kappa _{1}}{%
\tau _{1}}G_{1}(s)  \tag{3.36}
\end{equation}%
and from (3.34) one has%
\begin{equation}
G_{i+1}(s)=\frac{H_{i+1}(s)}{H_{2}};0\leq i\leq n+1.  \tag{3.37}
\end{equation}%
\textbf{Case 4}: For $2\leq i\leq n$ by differentiating (3.37) with respect
to $s$ and applying (3.32), one gets%
\begin{equation}
G_{i+1}^{\prime }=\frac{H_{i+1}^{\prime }}{H_{2}}=\frac{k_{i}H_{1}-\tau
_{i-1}H_{i}+\tau _{i}H_{i+2}}{H_{2}}=k_{i}G_{1}-\tau _{i-1}G_{i}+\tau
_{i}G_{i+2}  \tag{3.38}
\end{equation}%
\begin{equation}
G_{i+1}=\int \left( k_{i}G_{1}-\tau _{i-1}G_{i}+\tau _{i}G_{i+2}\right) ds. 
\tag{3.39}
\end{equation}%
\textbf{Case 5}: For $i=n+2$, $G_{n+2}(s)=\frac{H_{n+2}(s)}{H_{2}}$, and by
differentiating with respect to $s$ and applying (3.30), one gets%
\begin{equation}
G_{n+2}^{\prime }(s)=\frac{H_{n+2}^{\prime }(s)}{H_{2}}\Rightarrow
G_{n+2}(s)=-\int (\kappa _{1}+\underset{i=2}{\overset{n}{\sum }}G_{i}\kappa
_{i})ds.  \tag{3.40}
\end{equation}
\end{proof}

\section{Conclusion}

In this study, generalized $V_{n}-$slant helices are defined using newly
defined harmonic curvature functions in the $n+2$-dimensional lightlike cone
space $%
%TCIMACRO{\U{211a} }%
%BeginExpansion
\mathbb{Q}
%EndExpansion
^{n+1}\subset E_{1}^{n+2}$, and the harmonic curvature functions of these
helices are expressed using the asymptotic orthonormal frame in $%
%TCIMACRO{\U{211a} }%
%BeginExpansion
\mathbb{Q}
%EndExpansion
^{n+1}$. Furthermore, a constant vector field, denoted by $W$, the axis of
the helix, is defined along the curve using the $V_{n}-$ slant helices in $%
%TCIMACRO{\U{211a} }%
%BeginExpansion
\mathbb{Q}
%EndExpansion
^{n+1}$. This constant vector field $W$ provides differential equations and
characterizations of $V_{n}-$ slant helices.

Studies of slant helices on lightlike cone lie at the interface between
curve theory, pseudo-Riemannian geometry, and theoretical physics. The
resulting geometric and physical insights provide a solid foundation for
future research, including modelling the behaviour of light and fields in
curved spacetime, understanding the geometric structure of fundamental
physical theories, and contributing to the solution of advanced mathematical
problems. Studies in this area have high potential to lead to new
discoveries, particularly at the intersection of gravity and
electromagnetism.

\section*{Funding}

Not applicable.

\section*{Informed Consent Statement}

Not applicable.

\section*{Conflicts of Interest}

The author declares no conflict of interest.

\end{document}